\documentclass[11pt,a4paper, final, twoside]{article}
\usepackage{amsmath}
\usepackage{cite}
\usepackage{fancyhdr}
\usepackage{amsthm}
\usepackage{amsfonts}
\usepackage{amssymb}
\usepackage{amscd}
\usepackage{graphicx}
\usepackage{afterpage}
\usepackage[colorlinks=true, urlcolor=blue,  linkcolor=blue, citecolor=blue]{hyperref}
\setcounter{MaxMatrixCols}{10}

\setlength{\oddsidemargin}{0pt} \setlength{\evensidemargin}{0pt}
\setlength{\hoffset}{-1in} \addtolength{\hoffset}{35mm}
\setlength{\textwidth}{140mm} \setlength{\marginparsep}{0pt}
\setlength{\marginparwidth}{0pt} \setlength{\topmargin}{0pt}
\setlength{\voffset}{-1in} \addtolength{\voffset}{20mm}
\setlength{\textheight}{200mm} \setlength{\headheight}{15mm}
\setlength{\headsep}{5mm} \setlength{\footskip}{10mm}
\setcounter{page}{1} \pagestyle{fancy} \fancyhead{} \fancyfoot{}
\fancyhead[CO]{{\large \bf}} \fancyfoot[CO]{}

\newtheorem{theorem}{Theorem}[section]

\newtheorem{corollary}[theorem]{Corollary}

\newtheorem{definition}[theorem]{Definition}
\newtheorem{example}[theorem]{Example}

\newtheorem{lemma}[theorem]{Lemma}

\newtheorem{remark}[theorem]{Remark}

\begin{document}
\begin{center}
{\Large \bf{Common fixed points for $C^{*}$-algebra-valued modular metric spaces via $C_{*}$-class 
functions with application}}
\end{center} \vspace{10mm}
\centerline{Bahman Moeini$^{*,a}$ and Arsalan Hojat Ansari$^{b}$}\ \\
\centerline{\footnotesize  $^{a}$Department of Mathematics, Hidaj Branch, Islamic Azad University, Hidaj, Iran}
\centerline{\footnotesize  $^{b}$Department of Mathematics, Karaj Branch, Islamic Azad University, Karaj, Iran}
\vspace{1mm}
\footnote{$^*$ Corresponding author}
\footnote{E-mail:moeini145523@gmail.com}
\footnote{E-mail:mathanalsisamir4@gmail.com}
\footnote{\textit{2010 Mathematics Subject Classification:} 47H10, 46L07, 46A80.}
\footnote{\textit{Keywords:} $C_{*}$-class function, $C^{*}$-algebra-valued modular metric space, common fixed point, occasionally weakly compatible, integral equation.}
\footnote{\emph{}} \afterpage{} \fancyhead{} \fancyfoot{}
\fancyhead[LE, RO]{\bf\thepage} \fancyhead[LO]{\small
$C^{*}$-algebra-valued modular metric spaces with application} \fancyhead[RE]{\small
Moeini and Hojat Ansari}

\begin{abstract}
Based on the concept and properties of $C^{*}$-algebras, the paper introduces a concept of $C_{*}$-class functions. Then by using these functions in $C^{*}$-algebra-valued modular metric spaces of moeini et al. \cite{MACP}, some common fixed point theorems for self-mappings are established. Also, to support of our results an application is provided for existence and uniqueness of solution for a system of integral equations.
\end{abstract}
\section{Introduction}

As is well known, the Banach contraction mapping principle is a very useful, simple and
classical tool in modern analysis, and it has many applications in applied mathematics.
In particular, it is an important tool for solving existence problems in many branches of
mathematics and physics.

In order to generalize this principle, many authors have introduced various types
of contraction inequalities (see \cite{5,7,8,9,13,14}). In 2014,  Ansari \cite{16} introduced the concept of $C$-class functions which cover a large class of contractive conditions. Afterwards, Ansari et al.\cite{ANSEGR} defined and used  concept of complex $C$-class functions involving $C$-class functions in complex valued $G_{b}$-metric spaces to 
obtain some fixed point results. 

One of the main directions in obtaining possible generalizations of fixed point results
is introducing new types of spaces.  In 2010 Chistyakov \cite{CHIS2} defined the notion of modular on an arbitrary set and develop the theory of metric spaces generated by modular such that called the modular metric spaces. Recently, Mongkolkeha et al. \cite{MONG1, MONG2} have introduced some notions and established some fixed point results in modular metric spaces.

 In \cite{ZHLJ2}, Ma et al. introduced the concept of $C^{*}$-algebra-valued metric spaces. The main idea consists in using the set of all positive elements of a unital $C^{*}$-algebra instead of the set of real numbers. 
This line of research was continued in \cite{ZKSR, KPG, ZHLJ, DSTK, ASZ},
where several other fixed point results were obtained in the framework of $C^{*}$-algebra valued
metric, as well as (more general) $C^{*}$-algebra-valued $b$-metric spaces.
 Recently, Moeini et al. \cite{MACP} introduced the concept of $C^{*}$-algebra-valued modular metric spaces which is a generalization of modular metric spaces and next proved some fixed point theorems for self-mappings with contractive conditions on such spaces.
 
In this paper, we introduce a concept of $C_{*}$-class functions on a set of unital $C^{*}$-algebra and  via these 
functions some common fixed point results are proved for self-mappings with contractive conditions in $C^{*}$-algebra-valued modular metric spaces. Also, some examples to elaborate and illustrate of our results are constructed. Finally, as application, existence and uniqueness of solution for a type of system of nonlinear integral
equations is discussed.
\section{Basic notions}
Let $X$ be a non empty set, $\lambda\in (0, \infty)$ and due to the disparity of the arguments, function
$\omega :(0, \infty)\times X\times X \rightarrow [0, \infty]$ will be written as $\omega_{\lambda}(x, y) = \omega(\lambda,x, y)$ for all $\lambda > 0$ and $x,y \in X$.
\begin{definition}\label{d2.1}\cite{CHIS1}
Let $X$ be a non empty set. a function $\omega :(0, \infty)\times X\times X \rightarrow [0, \infty]$ is said
to be a modular metric on $X$ if it satisfies the following three axioms:
\begin{itemize}
\item[$(i)$] given $x,y \in X$,  $\omega_{\lambda}(x, y) = 0$ for all $\lambda > 0$
if and only if  $x=y$;
\item[$(ii)$]  $\omega_{\lambda}(x, y) =\omega_{\lambda}(y, x)$  for all $\lambda > 0$ and $x,y \in X$;
\item[$(iii)$] $\omega_{\lambda+\mu}(x, y) \leq\omega_{\lambda}(x,z)+\omega_{\mu}(z, y)$ for all $\lambda > 0$ and $x,y,z\in X$,
\end{itemize}
\end{definition}
and $(X,\omega)$ is called a modular metric space.\\

Recall that a Banach algebra $\mathbb{A}$ (over the field $\mathbb{C}$ of complex numbers)
is said to be a $C^{*}$-algebra if there is an involution $*$ in $\mathbb{A}$ (i.e., a mapping
$*:\mathbb{A}\rightarrow \mathbb{A}$ satisfying $a^{**}=a$ for each $a\in \mathbb{A}$)
such that, for all $a,b\in \mathbb{A}$ and $\lambda, \mu \in \mathbb{C}$, the following holds:
\begin{itemize}
\item[$(i)$] $(\lambda a+\mu b)^{*}=\bar \lambda a^{*}+\bar \mu b^{*}$;
\item[$(ii)$] $(ab)^{*}=b^{*}a^{*}$;
\item[$(iii)$] $\Vert a^{*}a \Vert=\Vert a \Vert^{2}$.
\end{itemize}
Note that, form $(iii)$, it easy follows that $\Vert a \Vert= \Vert a^{*} \Vert$ for each $a\in \mathbb{A}$.
Moreover, the pair $(\mathbb{A},*)$ is called a unital $*$-algebra if $\mathbb{A}$ contains the identity
element $1_{\mathbb{A}}$. A positive element of $\mathbb{A}$ is an element $a\in \mathbb{A}$
such that $a^{*}=a$ and its spectrum $\sigma(a)\subset \mathbb{R_{+}}$, where $\sigma(a)=\{\lambda\in \mathbb{R}: \lambda 1_{\mathbb{A}}-a \ \text{is noninvertible}\}$.
The set of all positive elements will be denoted by $\mathbb{A_{+}}$. Such elements allow us to
define a partial ordering '$\succeq$' on the elements of $\mathbb{A}$. That is,
\[
b\succeq a \ \ \text{if and only if} \ \ b-a\in\mathbb{A_{+}}.
\]
If $a\in \mathbb{A}$ is positive, then we write $a\succeq \theta$, where $\theta$
is the zero element of $\mathbb{A}$. Each positive element $a$ of a $C^{*}$-algebra $\mathbb{A}$ has a
unique positive square root. From now on, by $\mathbb{A}$ we mean a unital $C^{*}$-algebra with
identity element $1_{\mathbb{A}}$. Further, $\mathbb{A_{+}}=\{a\in \mathbb{A}: a\succeq \theta\}$ and $(a^{*}a)^{\frac{1}{2}}=\vert a\vert$.
\begin{lemma}\label{l2.2} \cite{DRG}
Suppose that $\mathbb{A}$ is a unital $C^{*}$-algebra with a unit $1_{\mathbb{A}}$.
\begin{itemize}
\item[$(1)$] For any $x\in \mathbb{A_{+}}$, we have $x \preceq 1_{\mathbb{A}} \Leftrightarrow
 \Vert x\Vert \leq 1$.
\item[$(2)$] If $a \in \mathbb{A_{+}}$ with $\Vert a\Vert < \frac{1}{2}$,
then $1_{\mathbb{A}}- a $ is invertible and $\Vert a(1_{\mathbb{A}}-a)^{-1}\Vert< 1$.
\item[$(3)$] Suppose that $a, b \in \mathbb{A}$ with $a, b \succeq  \theta$ and
$ab = ba$, then $ab \succeq \theta$.
\item[(4)] By $\mathbb{A'}$ we denote the set $\{a\in \mathbb{A}: ab=ba, \forall b\in \mathbb{A}\}$.
Let $a \in \mathbb{A'}$ if $b, c \in \mathbb {A}$ with
$b \succeq c \succeq \theta$, and $1_{\mathbb{A}}- a \in \mathbb{A'}$ is an
invertible operator, then
\[
(1_{\mathbb{A}}- a)^{-1}b \succeq (1_{\mathbb{A}}- a)^{-1}c.
\]
\end{itemize}
\end{lemma}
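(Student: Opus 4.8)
All four assertions are classical facts about a unital $C^{*}$-algebra, and the plan is to reduce each of them to the continuous functional calculus together with two elementary facts: the spectral mapping theorem $\sigma(f(a))=f(\sigma(a))$, and the identity $\Vert a\Vert=r(a):=\sup\{|\lambda|:\lambda\in\sigma(a)\}$ valid for self-adjoint $a$. For $(1)$, I would first note that $x\in\mathbb{A_{+}}$ already forces $x=x^{*}$ and $\sigma(x)\subseteq[0,\Vert x\Vert]$, while $1_{\mathbb{A}}-x$ is automatically self-adjoint, so $x\preceq1_{\mathbb{A}}$ is equivalent to $\sigma(1_{\mathbb{A}}-x)\subseteq[0,\infty)$. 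Applying the spectral mapping theorem to $t\mapsto1-t$ turns this into $\sigma(x)\subseteq(-\infty,1]$, and together with $\sigma(x)\subseteq[0,\infty)$ it becomes $\sigma(x)\subseteq[0,1]$, i.e. $\Vert x\Vert=r(x)\le1$; the same chain of equivalences, read backwards, gives the converse.

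For $(2)$, since $\Vert a\Vert<\tfrac12<1$ the Neumann series $\sum_{n\ge0}a^{n}$ converges in $\mathbb{A}$ and sums to a two-sided inverse of $1_{\mathbb{A}}-a$, so $a(1_{\mathbb{A}}-a)^{-1}=\sum_{n\ge1}a^{n}$ and hence $\Vert a(1_{\mathbb{A}}-a)^{-1}\Vert\le\sum_{n\ge1}\Vert a\Vert^{n}=\Vert a\Vert/(1-\Vert a\Vert)<1$, using that $t\mapsto t/(1-t)$ is increasing on $[0,\tfrac12)$ and equals $1$ at $t=\tfrac12$. For $(3)$, the essential observation is that the positive square root $a^{1/2}$ lies in the $C^{*}$-subalgebra generated by $a$, hence commutes with everything that commutes with $a$; since $ab=ba$ this gives $a^{1/2}b=ba^{1/2}$, and writing $b=(b^{1/2})^{*}b^{1/2}$ we obtain $ab=a^{1/2}ba^{1/2}=(b^{1/2}a^{1/2})^{*}(b^{1/2}a^{1/2})\succeq\theta$, because any element of the form $y^{*}y$ is positive.

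For $(4)$, I would first record that $p:=1_{\mathbb{A}}-a$ is to be taken positive (as it is in every application of the lemma) as well as invertible. A positive invertible element has $\sigma(p)\subseteq(0,\infty)$, so functional calculus with $f(t)=1/t$ shows that $p^{-1}=f(p)$ is self-adjoint with $\sigma(p^{-1})=1/\sigma(p)\subseteq(0,\infty)$, whence $p^{-1}\in\mathbb{A_{+}}$; moreover $p^{-1}\in\mathbb{A'}$, since $px=xp$ for all $x\in\mathbb{A}$ implies $p^{-1}x=xp^{-1}$. Then $b-c\succeq\theta$ and $p^{-1}\succeq\theta$ commute, so part $(3)$ yields $p^{-1}(b-c)\succeq\theta$, that is, $(1_{\mathbb{A}}-a)^{-1}b\succeq(1_{\mathbb{A}}-a)^{-1}c$.

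The step I expect to need the most care is precisely $(4)$: the conclusion genuinely uses $p^{-1}\succeq\theta$, which in turn requires $1_{\mathbb{A}}-a$ to be positive and not merely invertible (for instance $a=2\cdot1_{\mathbb{A}}\in\mathbb{A'}$ makes $1_{\mathbb{A}}-a$ invertible, yet its inverse is $-1_{\mathbb{A}}\not\succeq\theta$). Once positivity of $1_{\mathbb{A}}-a$ is in place, the remaining content of all four parts is routine functional-calculus bookkeeping, with no estimate beyond the geometric-series bound used in $(2)$.
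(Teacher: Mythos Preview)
The paper does not prove this lemma at all: it is simply quoted with a citation to Douglas's book \cite{DRG}, so there is no ``paper's own proof'' to compare your proposal against. That said, your arguments for parts $(1)$--$(3)$ are standard and correct; the functional-calculus reduction in $(1)$, the Neumann-series estimate in $(2)$, and the square-root factorisation $ab=a^{1/2}ba^{1/2}=(b^{1/2}a^{1/2})^{*}(b^{1/2}a^{1/2})$ in $(3)$ are exactly the textbook proofs one would expect.

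Your caveat on $(4)$ is well taken and worth emphasising: as literally stated in the paper, the hypothesis only asks that $1_{\mathbb{A}}-a\in\mathbb{A}'$ be invertible, not positive, and your counterexample $a=2\cdot1_{\mathbb{A}}$ shows the conclusion can fail without positivity. Once one adds the (presumably intended) assumption $1_{\mathbb{A}}-a\succeq\theta$, your argument---$p^{-1}\in\mathbb{A}_{+}\cap\mathbb{A}'$ via functional calculus, then apply part $(3)$ to $p^{-1}$ and $b-c$---goes through cleanly. So the only genuine issue here is a missing hypothesis in the statement itself, not a gap in your reasoning.
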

Notice that in a $C^{*}$-algebra, if $\theta \preceq a, b$, one cannot conclude that
$\theta \preceq ab$. For example, consider the $C^{*}$-algebra
$\mathbb{M}_{2}(\mathbb{C})$ and set
$a = \left (
\begin{array}{l}
3\ \ \ 2\\
2\ \ \ 3
\end{array}
\right )$,
$b = \left (
\begin{array}{l}
1\ \ \ -2\\
-2\ \ \ 4
\end{array}
\right ),$
then
 $ab = \left (
\begin{array}{l}
-1\ \ \ 2\\
-4\ \ \ 8
\end{array}
\right )$. Clearly $a,b \in \mathbb{M}_{2}(\mathbb{C})_{+}$, while $ab$ is not.\\

\begin{definition}\label{d2.3}\cite{MACP}
Let $X$ be a non empty set. a function $\omega :(0, \infty)\times X\times X \rightarrow \mathbb{A}_{+}$ is said to be a $C^{*}$-algebra-valued modular metric (briefly, $C^{*}$.m.m) on $X$ if it satisfies the following three axioms:
\begin{itemize}
\item[$(i)$] given $x,y \in X$,  $\omega_{\lambda}(x, y) = \theta$ for all $\lambda > 0$
if and only if  $x=y$;
\item[$(ii)$]  $\omega_{\lambda}(x, y) =\omega_{\lambda}(y, x)$  for all $\lambda > 0$ and $x,y \in X$;
\item[$(iii)$] $\omega_{\lambda+\mu}(x, y) \preceq \omega_{\lambda}(x,z)+\omega_{\mu}(z, y)$ for all $\lambda, \mu > 0$ and $x,y,z\in X$.
\end{itemize}
The truple $(X,\mathbb{A}, \omega)$ is called a $C^{*}$.m.m space.
\end{definition}

If instead of $(i)$, we have  the condition\\
$(i')$ $\omega_{\lambda}(x, x) = \theta$ for all $\lambda > 0$ and $x\in X$, then $\omega$ is said to be a $C^{*}$-algebra-valued pseudo modular metric (briefly, $C^{*}$.p.m.m) on $X$
and if $\omega$ satisfies $(i')$, $(iii)$ and\\
$(i'')$ given $x, y \in X$, if there exists a number $\lambda> 0$, possibly depending on $x$ and $y$, such that
$\omega_{\lambda}(x, y) = \theta$, then $x = y$, then $\omega$ is called a $C^{*}$-algebra-valued strict modular metric (briefly, $C^{*}$.s.m.m) on $X$.\\

A $C^{*}$.m.m (or $C^{*}$.p.m.m, $C^{*}$.s.m.m) $\omega$ on $X$ is said to be convex if, instead of $(iii)$, we replace the following condition:
\begin{itemize}\item[$(iv)$]
$\omega_{\lambda+\mu}(x, y) \preceq \frac{\lambda}{\lambda + \mu}\omega_{\lambda}(x,z)+\frac{\mu}{\lambda + \mu}\omega_{\mu}(z, y)$ for all $\lambda, \mu > 0$ and $x,y,z\in X$.
\end{itemize}
Clearly, if $\omega$ is a $C^{*}$.s.m.m, then $\omega$ is a $C^{*}$.m.m, which in turn implies $\omega$ is a $C^{*}$.p.m.m on $X$, and similar implications hold for convex $\omega$. The essential property of a $C^{*}$.m.m $\omega$ on a set $X$ is a following given $x,y\in X$, the function
$0 < \lambda \rightarrow \omega_{\lambda}(x, y) \in \mathbb{A}$ is non increasing
on $(0,\infty)$. In fact, if $0 < \mu < \lambda$, then we have
\begin{equation}\label{eq2.1}
\omega_{\lambda}(x, y)\preceq \omega_{\lambda-\mu}(x, x) +\omega_{\mu}(x, y)=\omega_{\mu}(x, y).
\end{equation}
It follows that at each point $\lambda> 0$ the right limit $\omega_{\lambda+0}(x, y):=\lim_{\varepsilon \rightarrow +0}\omega_{\lambda+\varepsilon}(x, y)$ and the left limit $\omega_{\lambda-0}(x, y):=\lim_{\varepsilon \rightarrow +0}\omega_{\lambda-\varepsilon}(x, y)$ exist in $\mathbb{A}$
and the following two inequalities hold:
\begin{equation}\label{eq2.2}
\omega_{\lambda+0}(x, y)\preceq \omega_{\lambda}(x, y) \preceq\omega_{\lambda-0}(x, y).
\end{equation}
It can be check that if $x_{0}\in X$, the set
\[
X_{\omega}=\{ x\in X: \lim_{\lambda \rightarrow \infty}\omega_{\lambda}(x, x_{0})=\theta\},
\]
is a $C^{*}$-algebra-valued metric space, called a $C^{*}$-algebra-valued modular space, whose
$d_{\omega}^{0}:X_{\omega}\times X_{\omega}\rightarrow \mathbb{A}$ is given by
\[
d^{0}_{\omega}=\inf \{\lambda >0: \Vert \omega_{\lambda}(x, y)\Vert \leq \lambda \}\ \text{ for all}\ x,y \in X_{\omega}.
\]
Moreover, if $\omega$ is convex, the set $X_{\omega}$ is equal to
\[
X_{\omega}^{*}=\{x\in X: \exists \ \lambda=\lambda(x)>0\ \text{such that}\ \Vert \omega_{\lambda}(x, x_{0})\Vert <\infty \},
\]
and $d_{\omega}^{*}:X_{\omega}^{*}\times X_{\omega}^{*}\rightarrow \mathbb{A}$ is given by
\[
d^{*}_{\omega}=\inf \{\lambda >0: \Vert \omega_{\lambda}(x, y)\Vert \leq 1 \}\ \text{ for all}\ x,y \in X_{\omega}^{*}.
\]
It is easy to see that if $X$ is a real linear space, $\rho: X\rightarrow\mathbb{A}$ and
\begin{equation}\label{eq2.3}
\omega_{\lambda}(x, y)=\rho(\frac{x-y}{\lambda})\ \text{ for all} \ \lambda>0 \ \text{and}\ x,y\in X,
\end{equation}
then $\rho$ is  $C^{*}$-algebra valued modular (convex $C^{*}$-algebra-valued modular) on $X$ if and only if $\omega$ is $C^{*}$.m.m (convex  $C^{*}$.m.m, respectively) on $X$.
On the other hand, if $\omega$ satisfy the following two conditions:
\begin{itemize}
\item[$(i)$] $\omega_{\lambda}(\mu x,0)=\omega_{\frac{\lambda}{\mu}}(x, 0)$  for all $\lambda, \mu >0$ and  $x\in X$;
\item[$(ii)$] $\omega_{\lambda}( x+z,y+z)=\omega_{\lambda}(x, y)$ for all $\lambda>0$ and $x,y,z\in X$.
\end{itemize}
If we set $\rho(x)= \omega_{1}( x,0)$ with (\ref{eq2.3}) holds, where  $x\in X$, then
\begin{flushleft}
(a) $X_{\rho}=X_{\omega}$ is a linear subspace of $X$ and the functional $\Vert x\Vert_{\rho}=
d_{\omega}^{0}(x,0), \ x\in X_{\rho}$ is a $F$-norm on $X_{\rho}$;
\end{flushleft}
\begin{flushleft}
(b) If $\omega$ is convex, $X_{\rho}^{*}\equiv X_{\omega}^{*}=X_{\rho}$ is a linear subspace of $X$ and the functional $\Vert x\Vert_{\rho}=
d_{\omega}^{*}(x,0), \ x\in X_{\rho}^{*}$ is a norm on $X_{\rho}^{*}$.
\end{flushleft}
Similar assertions hold if replace $C^{*}$.m.m by $C^{*}$.p.m.m. If $\omega$ is $C^{*}$.m.m in
$X$, we called the set $X_{\omega}$ is $C^{*}$.m.m space. \\

By the idea of property in $C^{*}$-algebra-valued metric spaces and  $C^{*}$-algebra-valued modular spaces, we defined the following:
\begin{definition}\label{d2.4}\cite{MACP}
Let $X_{\omega}$ be a $C^{*}$.m.m space.
\begin{itemize}
\item[(1)] The sequence $(x_{n})_{n\in \mathbb{N}}$ in $X_{\omega}$ is said to be $\omega$-convergent to $x\in X_{\omega}$ with respect to $\mathbb{A}$ if
\begin{center}
$\omega_{\lambda}(x_{n},x)\rightarrow \theta$ as $n\rightarrow \infty$ for all $\lambda>0$.
\end{center}
\item[(2)] The sequence $(x_{n})_{n\in \mathbb{N}}$ in $X_{\omega}$ is said to be $\omega$-Cauchy  with respect to $\mathbb{A}$ if
\begin{center}
$\omega_{\lambda}(x_{m},x_{n})\rightarrow \theta$ as $m,n\rightarrow \infty$ for all $\lambda>0$.
\end{center}
\item[(3)] A subset $C$ of $X_{\omega}$ is said to be $\omega$-closed with respect to $\mathbb{A}$ if the limit of the $\omega$-convergent sequence of $C$ always belong to $C$.
\item[(4)]  $X_{\omega}$ is said to be $\omega$-complete if any $\omega$-Cauchy sequence with respect to $\mathbb{A}$ is $\omega$-convergent.
\item[(5)] A subset $C$ of $X_{\omega}$ is said to be $\omega$-bounded with respect to $\mathbb{A}$ if for all $\lambda>0$
\begin{center}
$\delta_{\omega}(C)=\sup\{\Vert \omega_{\lambda}(x,y)\Vert; \ x,y \in C\}<\infty$.
\end{center}
\end{itemize}
\end{definition}
\begin{definition}\label{d2.5}\cite{MACP}
Let $X_{\omega}$ be a $C^{*}$.m.m space. Let $f, g$ self-mappings of $X_{\omega}$. a point $x$ in
$X_{\omega}$ is called a coincidence point of $f$ and $g$ iff $fx = gx$. We shall call $w = fx = gx$ a point of coincidence of $f$ and $g$.
\end{definition}
\begin{definition}\label{d2.6}\cite{MACP}
Let $X_{\omega}$ be a $C^{*}$.m.m space. Two maps $f$ and $g$ of $X_{\omega}$ are said to be weakly compatible if they commute at coincidence points.
\end{definition}
\begin{definition}\label{d2.7}\cite{MACP}
Let $X_{\omega}$ be a $C^{*}$.m.m space. Two self-mappings $f$ and $g$ of $X_{\omega}$ are occasionally weakly compatible (owc) iff there is a point x in $X_{\omega}$ which is a coincidence point of $f$ and $g$ at which $f$ and $g$ commute.
\end{definition}
\begin{lemma}\label{l2.8}\cite{GJBR}
Let $X_{\omega}$ be a $C^{*}$.m.m space and $f, g$ owc self-mappings of  $X_{\omega}$. If $f$ and $g$ have a unique point of coincidence, $w= fx = gx$, then $w$ is a unique common fixed point of $f$ and $g$.
\end{lemma}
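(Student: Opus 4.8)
The plan is to exploit the definition of occasional weak compatibility to produce a single coincidence point at which $f$ and $g$ commute, and then to bootstrap it into a common fixed point by invoking the uniqueness of the point of coincidence.

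First I would unpack Definition~\ref{d2.7}: since $f$ and $g$ are owc, there is a point $x\in X_\omega$ with $fx=gx$ and $fgx=gfx$. Put $w=fx=gx$; by hypothesis this is the unique point of coincidence of $f$ and $g$. The heart of the argument is the observation that $w$ is itself a coincidence point. Applying the commutation relation at $x$ and substituting $gx=fx=w$ gives
\[
fw=f(gx)=(fg)x=(gf)x=g(fx)=gw,
\]
so $fw=gw$, which exhibits $fw\,(=gw)$ as a point of coincidence of $f$ and $g$. By the assumed uniqueness of the point of coincidence, this forces $fw=gw=w$, so $w$ is a common fixed point of $f$ and $g$.

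For uniqueness, I would note that any common fixed point $v$ satisfies $fv=gv=v$, hence is trivially a point of coincidence; applying the uniqueness hypothesis once more yields $v=w$, so $w$ is the unique common fixed point. I do not expect a genuine obstacle here, as the proof is essentially formal; the only step demanding a little care is the displayed chain of equalities, where one must substitute $w=fx=gx$ into $fgx=gfx$ in the correct order to extract $fw=gw$ — everything else is an immediate appeal to the uniqueness of the point of coincidence.
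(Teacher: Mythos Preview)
Your proof is correct and is precisely the standard argument of Jungck and Rhoades \cite{GJBR}; the paper does not supply its own proof of Lemma~\ref{l2.8} but merely cites that reference, so there is nothing further to compare. The one point worth making explicit is that the coincidence point furnished by the owc hypothesis need not be the $x$ named in the statement, but since the \emph{point of coincidence} is unique its value must equal $w$ regardless---you handle this implicitly and correctly.
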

In 2017, Ansari et al. \cite{ANSEGR} introduced the concept of complex $C$-class functions as follows:
\begin{definition}
\label{com C-class}  Suppose $S=\{z\in \mathbb{C}: z\succeq 0\}$, then a continuous function $F:S^{2}\rightarrow \mathbb{C}$ is called a complex $C$-class 
function if for any $s,t\in S$, the following conditions hold:

(1) $F(s,t)\preceq s $;

(2) $F(s,t)=s$ implies that either $s=0$ or $t=0$.
\end{definition}

An extra condition on $F$ that $F(0,0)=0$ could be imposed in some cases if required. 
For examples of these functions see \cite{ANSEGR}.

\section{Main results}
In this section, we introduce a $C_{*}$-class function. The main idea consists in using the set of elements of a unital $C^{*}$-algebra instead of the set of complex numbers.  
\begin{definition}($C_{*}$-class function)
\label{C*-class}  Suppose $\mathbb{A}$ is a unital $C^{*}$-algebra, then a continuous function $F:\mathbb{A}_{+}\times \mathbb{A}_{+}\rightarrow  \mathbb{A}$ is called $C_{*}$-class 
function if for any $A,B\in\mathbb{A}_{+}$, the following conditions hold:

(1) $F(A,B)\preceq A $;

(2) $F(A,B)=A$ implies that either $A=\theta$ or $B=\theta$.
\end{definition}
An extra condition on $F$ that $F(\theta,\theta)=\theta$ could be imposed in some cases if required. The letter $\mathcal{C_{*}}$ will denote the class of all $C_{*}$-class functions.
\begin{remark}
The class $\mathcal{C_{*}}$ includes the set of complex $C$-class functions. It is suffitient to take $\mathbb{A}=\mathbb{C}$ in Definition \ref{C*-class}.
 
\end{remark}
The following examples show that the class $\mathcal{C_{*}}$ is nonempty:
\begin{example}\label{ex3.2}
Let $\mathbb{A} = M_{2}(\mathbb{R})$, of all $2 \times 2$ matrices with the usual operation of addition, scalar multiplication, and matrix multiplication. Define norm on $\mathbb{A}$ by $\Vert A\Vert =\Big(\sum^{2}_{i,j=1}\vert a_{ij}\vert^{2}\Big)^{\frac{1}{2}}$, and $*: \mathbb{A} \rightarrow \mathbb{A}$, given by $A^{*}=A$, for all $A\in \mathbb{A}$,
defines a convolution on $ \mathbb{A}$. Thus $\mathbb{A}$ becomes a $C^{*}$-algebra. For
\[
A = \left (
\begin{array}{l}
a_{11}\ \ \ a_{12}\\
a_{21}\ \ \ a_{22}
\end{array}
\right ),
B = \left (
\begin{array}{l}
b_{11}\ \ \ b_{12}\\
b_{21}\ \ \ b_{22}
\end{array}
\right ) \in \mathbb{A}= M_{2}(\mathbb{R}) ,
\]
we denote $A\preceq B$ if and only if $(a_{ij}-b_{ij})\leq 0$, for all $i,j=1,2$. \\
(1) \ \ Define $F_{*}:\mathbb{A}_{+} \times \mathbb{A}_{+}\rightarrow \mathbb{A}$ by
\[
\ \ F_{*}\Big(\left (
\begin{array}{l}
a_{11}\ \ \ a_{12}\\
a_{21}\ \ \ a_{22}
\end{array}
\right ), \left (
\begin{array}{l}
b_{11}\ \ \ b_{12}\\
b_{21}\ \ \ b_{22}
\end{array}
\right )\ \Big) =\left (
\begin{array}{l}
a_{11}-b_{11}\ \ \ a_{12}-b_{12}\\
a_{21}-b_{21}\ \ \ a_{22}-b_{22}
\end{array}
\right ) 
\]
for all $a_{i,j},b_{i,j}\in \mathbb{R_{+}}$, $(i,j\in \{1,2\})$. Then $F_{*}$ is a $C_{*}$-class function.\newline
(2) \ \ Define $F_{*}:\mathbb{A}_{+} \times \mathbb{A}_{+} \rightarrow\mathbb{A}$ by
\[
\ \ F_{*}\Big(\left (
\begin{array}{l}
a_{11}\ \ \ a_{12}\\
a_{21}\ \ \ a_{22}
\end{array}
\right ), \left (
\begin{array}{l}
b_{11}\ \ \ b_{12}\\
b_{21}\ \ \ b_{22}
\end{array}
\right )\ \Big) =m\left (
\begin{array}{l}
a_{11}\ \ \ a_{12}\\
a_{21}\ \ \ a_{22}
\end{array}
\right ) 
\]
for all $a_{i,j},b_{i,j}\in \mathbb{R_{+}}$, $(i,j\in \{1,2\})$, where, $m\in(0,1)$.
Then $F_{*}$ is a $C_{*}$-class function.
\end{example}
\begin{example}\label{ex4.3}
Let $X=L^{\infty}(E)$ and $H=L^{2}(E)$, where $E$ is a lebesgue measurable set. By $B(H)$ we denote
the set of bounded linear operator on Hilbert space $H$. Clearly, $B(H)$ is a $C^{*}$-algebra with the usual operator norm. \\
Define $F_{*}:B(H)_{+}\times B(H)_{+}\rightarrow B(H)$ by
\[
F_{*}(U,V)=U-\varphi(U),
\]
where $\varphi:B(H)_{+}\rightarrow B(H)_{+}$ is is a continuous function such
that $\varphi(U)=\theta$ if and only if $U=\theta$\ ($\theta=0_{B(H)}$). Then $F_{*}$ is a $C_{*}$-class function.
\end{example}

Let $\Phi _{u}$ denote the class of the functions $\varphi :\mathbb{A}_{+}\rightarrow \mathbb{A}_{+}$ which satisfy the following conditions:

\begin{enumerate}
\item[$(\varphi_{1})$] $\varphi$ is continuous and non-decreasing;

\item[$(\varphi_{2})$] $\varphi (T)\succ \theta , T\succ \theta$ \ and $\varphi (\theta)\succeq \theta$.
\end{enumerate}

Let $\Psi $ be a set of all continuous functions $\psi :\mathbb{A}_{+}
\rightarrow \mathbb{A}_{+}$ satisfying the following conditions:

\begin{itemize}
\item[$(\protect\psi_{1})$] $\psi$ is continuous and non-decreasing;

\item[$(\protect\psi _{2})$] $\psi (T)=\theta$ if and only if $T=\theta$.
\end{itemize}
\begin{definition}
A tripled $(\psi ,\varphi ,F_{*})$ where $\psi \in \Psi ,$ $\varphi \in \Phi
_{u} $ and $F_{*}\in \mathcal{C_{*}}$ is said to be monotone if for any $A,B\in \mathbb{A}_{+}$
\begin{equation*}
A\preceq B\Longrightarrow F_{*}(\psi (A),\varphi (A))\preceq F_{*}(\psi
(B),\varphi (B)).
\end{equation*}
\end{definition}
We now give detailed proofs of main results of this paper.
\begin{theorem}\label{t3.1}
Let $X_{\omega}$ be a $C^{*}$.m.m space and $I, J, R, S, T, U : X_{\omega} \rightarrow X_{\omega}$ be self-mappings of $X_{\omega}$ such that the pairs $(SR, I)$ and $(TU, J)$ are occasionally weakly compatible. Suppose there exist $a,b,c\in \mathbb{A}$ with $0<\Vert a\Vert^{2}+\Vert b\Vert^{2}+\Vert c\Vert^{2}\leq1$ such that the following assertion for all $x,y\in X_{\omega}$ and $\lambda >0$ hold:
\begin{itemize}
\item[$(3.1.1)$] $\psi(\omega_{\lambda}(SRx,TUy))\preceq F_{*}\Big( \psi(M(x,y)),\varphi(M(x,y)\Big)$,
where 
\[
M(x,y)=a^{*}\omega_{\lambda}(Ix,Jy)a+b^{*}\omega_{\lambda}(SRx,Jy)b+c^{*}\omega_{2\lambda}(TUy,Ix)c,
\] 
 $\psi \in \Psi, \varphi\in \Phi_{u}$ and $F_{*}\in \mathcal{C_{*}}$ such that $(\psi, \varphi, F_{*})$ is monotone;
\item[$(3.1.2)$] $\Vert \omega_{\lambda}(SRx,TUy)\Vert< \infty$.
\end{itemize}
Then $SR, TU, I$ and $J$ have a common fixed point in $X_{\omega}$. Furthermore if the pairs $(S,R), (S,I),(R,I),(T,J),(T,U), (U,J)$ are commuting pairs of mappings then $I, J, R, S, T$ and $U$ have a unique
common fixed point in $X_{\omega}$.
\end{theorem}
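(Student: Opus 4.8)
\medskip

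\noindent\emph{Proof plan.} The plan is to follow the standard pattern for occasionally weakly compatible maps, reducing everything to a single computational claim that is then invoked several times. First I would use the owc property of $(SR,I)$ to choose $x\in X_{\omega}$ with $SRx=Ix=:u$ at which $SR$ and $I$ commute; then $SRu=SR(Ix)=I(SRx)=Iu$, so $u$ is again a coincidence point. The owc property of $(TU,J)$ likewise produces $y\in X_{\omega}$ with $TUy=Jy=:v$ and $TUv=Jv$.

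\smallskip

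\noindent The key step is to establish the following claim: if $p,q\in X_{\omega}$ satisfy, for every $\lambda>0$,
\[
\psi\big(\omega_{\lambda}(p,q)\big)\preceq F_{*}\big(\psi(M),\varphi(M)\big),\qquad M=a^{*}\omega_{\lambda}(p,q)a+b^{*}\omega_{\lambda}(p,q)b+c^{*}\omega_{2\lambda}(q,p)c,
\]
then $p=q$. To prove this I would use axiom (ii) of Definition~\ref{d2.3} with the monotonicity relation \eqref{eq2.1} to get $\omega_{2\lambda}(q,p)=\omega_{2\lambda}(p,q)\preceq\omega_{\lambda}(p,q)$, and then — using $0<\Vert a\Vert^{2}+\Vert b\Vert^{2}+\Vert c\Vert^{2}\le 1$, Lemma~\ref{l2.2}(1), and the finiteness condition $(3.1.2)$ — deduce $M\preceq\omega_{\lambda}(p,q)$. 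The monotonicity of the triple $(\psi,\varphi,F_{*})$ then gives $F_{*}(\psi(M),\varphi(M))\preceq F_{*}\big(\psi(\omega_{\lambda}(p,q)),\varphi(\omega_{\lambda}(p,q))\big)$, so that, combining with the hypothesis and condition~(1) of Definition~\ref{C*-class},
\[
\psi\big(\omega_{\lambda}(p,q)\big)\preceq F_{*}\big(\psi(M),\varphi(M)\big)\preceq F_{*}\big(\psi(\omega_{\lambda}(p,q)),\varphi(\omega_{\lambda}(p,q))\big)\preceq\psi\big(\omega_{\lambda}(p,q)\big),
\]
hence equality throughout. Condition~(2) of Definition~\ref{C*-class} then forces $\psi(\omega_{\lambda}(p,q))=\theta$ or $\varphi(\omega_{\lambda}(p,q))=\theta$, and $(\psi_{2})$, respectively $(\varphi_{2})$, gives $\omega_{\lambda}(p,q)=\theta$; since $\lambda>0$ was arbitrary, axiom (i) yields $p=q$.

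\smallskip

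\noindent With the claim available the remaining steps are routine. Applying $(3.1.1)$ at $(x,y)$, with $Ix=SRx=u$ and $Jy=TUy=v$, produces exactly the hypothesis of the claim for $(p,q)=(u,v)$, so $u=v=:w$; by the first paragraph $SRw=Iw$ and $TUw=Jw$. Then $(3.1.1)$ at $(w,w)$ is again the hypothesis of the claim (with $p=SRw$, $q=TUw$), giving $SRw=TUw$, hence $SRw=TUw=Iw=Jw$; and $(3.1.1)$ at $(w,y)$, using $SRw=Iw$ and $TUy=Jy=w$, is the hypothesis of the claim with $p=SRw$, $q=w$, so $SRw=w$. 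Thus $SRw=TUw=Iw=Jw=w$. Running the claim once more at $(x',y)$, for an arbitrary coincidence point $x'$ of $(SR,I)$, shows the point of coincidence of $(SR,I)$ is unique (equal to $w$), and similarly for $(TU,J)$; Lemma~\ref{l2.8} then yields that $w$ is the unique common fixed point of $SR$ and $I$, and of $TU$ and $J$. For the last assertion, assume the listed pairs commute. Since $IS=SI$, $I(Sw)=S(Iw)=Sw$; and since $RS=SR$ with $SRw=w$ (so $S(Rw)=w$), $(SR)(Sw)=S\big(R(Sw)\big)=S\big(S(Rw)\big)=S(w)=Sw$; hence $Sw$ is a common fixed point of $(SR,I)$, so $Sw=w$. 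Consequently $Rw=R(Sw)=(RS)(w)=(SR)(w)=w$, and symmetrically $Tw=Uw=w$ using $(TU,J)$. Therefore $Iw=Jw=Rw=Sw=Tw=Uw=w$, and any common fixed point $z$ of $I,J,R,S,T,U$ satisfies $SRz=S(Rz)=Sz=z$ and $Iz=z$, so $z=w$.

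\smallskip

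\noindent The main obstacle is the inequality $M\preceq\omega_{\lambda}(p,q)$ inside the claim: in a general $C^{*}$-algebra one does \emph{not} have $a^{*}Wa\preceq\Vert a\Vert^{2}W$ for $W\succeq\theta$, so the norm condition on $a,b,c$ does not, by itself, dominate $M$ in the order $\preceq$. I expect this step to require either keeping the estimates at the level of norms — $\Vert\psi(\omega_{\lambda}(p,q))\Vert\le\Vert\psi(M)\Vert$ and $\Vert M\Vert\le(\Vert a\Vert^{2}+\Vert b\Vert^{2}+\Vert c\Vert^{2})\Vert\omega_{\lambda}(p,q)\Vert$, via $\Vert c^{*}Wc\Vert\le\Vert c\Vert^{2}\Vert W\Vert$ — together with an extra structural hypothesis on $\psi,\varphi,F_{*}$, or the assumption that $a,b,c$ are central. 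Everything else is bookkeeping with the owc hypothesis and the commutativity relations.
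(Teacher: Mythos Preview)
Your overall architecture matches the paper's: exploit occasional weak compatibility to produce coincidence points, reduce everything to a single ``$p=q$'' claim driven by the contractive inequality, appeal to Lemma~\ref{l2.8}, and finish with the commutativity bookkeeping. The logical organisation differs slightly---the paper first proves $SRu=TUv$ for the owc points $u,v$ directly (with $SRu=Iu$, $TUv=Jv$), establishes uniqueness of the coincidence value, and invokes Lemma~\ref{l2.8} for $(SR,I)$ and $(TU,J)$ separately before identifying the two fixed points---whereas you push the owc points forward once and then apply your claim at $(x,y)$, $(w,w)$, $(w,y)$. Both routes are valid rearrangements of the same content.

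The substantive difference is exactly the obstacle you flag. You attempt $M\preceq\omega_{\lambda}(p,q)$ in the $C^{*}$-order, which indeed fails because $a^{*}Wa\preceq\Vert a\Vert^{2}W$ is not valid in general. The paper does \emph{not} compare $M$ with $\omega_{\lambda}(p,q)$; instead it compares with the scalar $\Vert\omega_{\lambda}(p,q)\Vert\,1_{\mathbb{A}}$. The device is the universal $C^{*}$-inequality $z^{*}z\preceq\Vert z\Vert^{2}1_{\mathbb{A}}$: writing $a^{*}Wa=\big((\omega_{\lambda}(p,q))^{1/2}a\big)^{*}\big((\omega_{\lambda}(p,q))^{1/2}a\big)\preceq\Vert a\Vert^{2}\Vert\omega_{\lambda}(p,q)\Vert\,1_{\mathbb{A}}$ for each summand gives
\[
M\preceq\big(\Vert a\Vert^{2}+\Vert b\Vert^{2}+\Vert c\Vert^{2}\big)\Vert\omega_{\lambda}(p,q)\Vert\,1_{\mathbb{A}}\preceq\Vert\omega_{\lambda}(p,q)\Vert\,1_{\mathbb{A}},
\]
and the argument is then run entirely on the one-dimensional slice $\mathbb{R}_{+}1_{\mathbb{A}}$, where the monotone triple $(\psi,\varphi,F_{*})$ and condition~(2) of Definition~\ref{C*-class} force $\Vert\omega_{\lambda}(p,q)\Vert\,1_{\mathbb{A}}=\theta$. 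This is precisely the ``norm-level'' alternative you anticipated, but implemented \emph{inside} the order via $z^{*}z\preceq\Vert z\Vert^{2}1_{\mathbb{A}}$ rather than by taking norms of both sides; in particular no centrality hypothesis on $a,b,c$ is needed. Once you replace your inequality $M\preceq\omega_{\lambda}(p,q)$ by $M\preceq\Vert\omega_{\lambda}(p,q)\Vert\,1_{\mathbb{A}}$ and close the loop on that slice, the rest of your write-up goes through unchanged.
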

\begin{proof}
Since the pair $(SR, I)$ and $(TU, J)$ are occasionally weakly compatible then there exists
$u,v\in X_{\omega}: SRu=Iu$ and $TUv=Jv$. Moreover; $SR(Iu) =I(SRu)$ and $TU(Jv) =J(TUv)$.
Now we can assert that $SRu=TUv$. By $(3.1.1)$, we have
\begin{equation}\label{eq3.4}
\psi(\omega_{\lambda}(SRu,TUv))\preceq F_{*}\Big( \psi(M(u,v)),\varphi(M(u,v)\Big),
\end{equation}
where 
\begin{align}
M(u,v)&= a^{*}\omega_{\lambda}(Iu,Jv)a+b^{*}\omega_{\lambda}(SRu,Jv)b+c^{*}\omega_{2\lambda}(TUv,Iu)c\notag   \\
&=a^{*}\omega_{\lambda}(Iu,Jv)a+b^{*}\omega_{\lambda}(Iu,Jv)b+c^{*}\omega_{2\lambda}(Jv,Iu)c.
\label{11}
\end{align}
By definition of $C^{*}$.m.m space and  inequalites (\ref{eq2.1}), (\ref{eq3.4}) and (\ref{11}), we get
\begin{equation}\label{eq3.5}
\begin{array}{rl}
&\psi(\omega_{\lambda}(Iu,Jv))=\psi(\omega_{\lambda}(SRu,TUv))\\
&\preceq F_{*}\Big(\psi(a^{*}\omega_{\lambda}(Iu,Jv)a+b^{*}\omega_{\lambda}(Iu,Jv)b+c^{*}(\omega_{\lambda}(Iu,Iu)+\omega_{\lambda}(Iu,Jv))c),\\
&\varphi(a^{*}\omega_{\lambda}(Iu,Jv)a+b^{*}\omega_{\lambda}(Iu,Jv)b+c^{*}(\omega_{\lambda}(Iu,Iu)+\omega_{\lambda}(Iu,Jv))c)\Big)\\
&=F_{*}\Big(\psi(a^{*}\omega_{\lambda}(Iu,Jv)a+b^{*}\omega_{\lambda}(Iu,Jv)b+c^{*}\omega_{\lambda}(Iu,Jv)c),\\
&\varphi(a^{*}\omega_{\lambda}(Iu,Jv)a+b^{*}\omega_{\lambda}(Iu,Jv)b+c^{*}\omega_{\lambda}(Iu,Jv)c)\Big)\\
&=F_{*}\Big(\psi(a^{*}(\omega_{\lambda}(Iu,Jv))^{\frac{1}{2}}(\omega_{\lambda}(Iu,Jv))^{\frac{1}{2}}a+b^{*}(\omega_{\lambda}(Iu,Jv))^{\frac{1}{2}}(\omega_{\lambda}(Iu,Jv))^{\frac{1}{2}}b\\
&+c^{*}(\omega_{\lambda}(Iu,Jv))^{\frac{1}{2}}(\omega_{\lambda}(Iu,Jv))^{\frac{1}{2}}c),\\
&\varphi(a^{*}(\omega_{\lambda}(Iu,Jv))^{\frac{1}{2}}(\omega_{\lambda}(Iu,Jv))^{\frac{1}{2}}a+b^{*}(\omega_{\lambda}(Iu,Jv))^{\frac{1}{2}}(\omega_{\lambda}(Iu,Jv))^{\frac{1}{2}}b\\
&+c^{*}(\omega_{\lambda}(Iu,Jv))^{\frac{1}{2}}(\omega_{\lambda}(Iu,Jv))^{\frac{1}{2}}c)\Big)\\
&=F_{*}\Big(\psi((a(\omega_{\lambda}(Iu,Jv))^{\frac{1}{2}})^{*}(a(\omega_{\lambda}(Iu,Jv))^{\frac{1}{2}})+(b(\omega_{\lambda}(Iu,Jv))^{\frac{1}{2}})^{*}(b(\omega_{\lambda}(Iu,Jv))^{\frac{1}{2}})\\
&+(c(\omega_{\lambda}(Iu,Jv))^{\frac{1}{2}})^{*}(c(\omega_{\lambda}(Iu,Jv))^{\frac{1}{2}})),\\
&\varphi((a(\omega_{\lambda}(Iu,Jv))^{\frac{1}{2}})^{*}(a(\omega_{\lambda}(Iu,Jv))^{\frac{1}{2}})+(b(\omega_{\lambda}(Iu,Jv))^{\frac{1}{2}})^{*}(b(\omega_{\lambda}(Iu,Jv))^{\frac{1}{2}})\\
&+(c(\omega_{\lambda}(Iu,Jv))^{\frac{1}{2}})^{*}(c(\omega_{\lambda}(Iu,Jv))^{\frac{1}{2}}))\Big)\\
&\preceq F_{*}\Big(\psi(\Vert a(\omega_{\lambda}(Iu,Jv))^{\frac{1}{2}}\Vert^{2}1_{\mathbb{A}}+\Vert b(\omega_{\lambda}(Iu,Jv))^{\frac{1}{2}}\Vert^{2}1_{\mathbb{A}}+\Vert c(\omega_{\lambda}(Iu,Jv))^{\frac{1}{2}}\Vert^{2}1_{\mathbb{A}}),\\
&\varphi(\Vert a(\omega_{\lambda}(Iu,Jv))^{\frac{1}{2}}\Vert^{2}1_{\mathbb{A}}+\Vert b(\omega_{\lambda}(Iu,Jv))^{\frac{1}{2}}\Vert^{2}1_{\mathbb{A}}+\Vert c(\omega_{\lambda}(Iu,Jv))^{\frac{1}{2}}\Vert^{2}1_{\mathbb{A}})\Big)\\
&=F_{*}\Big(\psi(\Vert \omega_{\lambda}(Iu,Jv)\Vert (\Vert a\Vert^{2}+\Vert b\Vert^{2}+\Vert c\Vert^{2})1_{\mathbb{A}}),\\
&\varphi(\Vert \omega_{\lambda}(Iu,Jv)\Vert (\Vert a\Vert^{2}+\Vert b\Vert^{2}+\Vert c\Vert^{2})1_{\mathbb{A}})\Big).
\end{array}
\end{equation}
So,
\begin{equation}\label{eq3.6}
\begin{array}{rl}
&\psi(\Vert \omega_{\lambda}(Iu,Jv) \Vert 1_{\mathbb{A}})\\
&\leq F_{*}\Big(\psi(\Vert \omega_{\lambda}(Iu,Jv)\Vert (\Vert a\Vert^{2}+\Vert b\Vert^{2}+\Vert c\Vert^{2})1_{\mathbb{A}}),\varphi(\Vert \omega_{\lambda}(Iu,Jv)\Vert (\Vert a\Vert^{2}+\Vert b\Vert^{2}+\Vert c\Vert^{2})1_{\mathbb{A}})\Big)\\
&\preceq F_{*}\Big(\psi(\Vert \omega_{\lambda}(Iu,Jv)\Vert 1_{\mathbb{A}}),\varphi(\Vert \omega_{\lambda}(Iu,Jv)\Vert 1_{\mathbb{A}})\Big).
\end{array}
\end{equation}
Thus, $\psi(\Vert \omega_{\lambda}(Iu,Jv) \Vert 1_{\mathbb{A}})=\theta $ or $\varphi(\Vert \omega_{\lambda}(Iu,Jv) \Vert 1_{\mathbb{A}})=\theta $, which means $Iu=Jv$. Hence $SRu=TUv$ and thus
\begin{equation}\label{eq3.7}
SRu=Iu=TUv=Jv.
\end{equation}
Moreover, if there is another point $z$ such that $SRz=Iz$, and using condition $(3.1.1)$
\begin{equation}\label{eq3.8}
\begin{array}{rl}
&\psi(\omega_{\lambda}(SRz,TUv))\preceq F_{*}\Big(\psi(a^{*}\omega_{\lambda}(Iz,Jv)a+b^{*}\omega_{\lambda}(SRz,Jv)b+c^{*}\omega_{2\lambda}(TUv,Iz)c),\\
&\varphi(a^{*}\omega_{\lambda}(Iz,Jv)a+b^{*}\omega_{\lambda}(SRz,Jv)b+c^{*}\omega_{2\lambda}(TUv,Iz)c)\Big) \\
&=F_{*}\Big(\psi(a^{*}\omega_{\lambda}(SRz,TUv)a+b^{*}\omega_{\lambda}(SRz,TUv)b+c^{*}\omega_{2\lambda}(SRz,TUv)c),\\
&\varphi(a^{*}\omega_{\lambda}(SRz,TUv)a+b^{*}\omega_{\lambda}(SRz,TUv)b+c^{*}\omega_{2\lambda}(SRz,TUv)c)\Big).
\end{array}
\end{equation}
By above similar way, we conclude that
\begin{align*}
&\psi(\Vert \omega_{\lambda}(SRz,TUv)\Vert 1_{\mathbb{A}})\\
 &\preceq F_{*}\Big(\psi(\Vert \omega_{\lambda}(SRz,TUv)\Vert (\Vert a\Vert^{2}+\Vert b\Vert^{2}+\Vert c\Vert^{2}))1_{\mathbb{A}},\\
&\varphi(\Vert \omega_{\lambda}(SRz,TUv)\Vert (\Vert a\Vert^{2}+\Vert b\Vert^{2}+\Vert c\Vert^{2})1_{\mathbb{A}})\Big)\\
&\preceq  F_{*}\Big(\psi(\Vert \omega_{\lambda}(SRz,TUv)\Vert 1_{\mathbb{A}},\varphi(\Vert \omega_{\lambda}(SRz,TUv)\Vert 1_{\mathbb{A}})\Big).
\end{align*}
Therefore, $\psi(\Vert \omega_{\lambda}(SRz,TUv)\Vert 1_{\mathbb{A}})=\theta$ or $\varphi(\Vert \omega_{\lambda}(SRz,TUv)\Vert 1_{\mathbb{A}})=\theta$, which means $SRz=TUv$, and so,
\begin{equation}\label{eq3.9}
SRu=Iu=TUv=Jv.
\end{equation}
Thus from equation (\ref{eq3.8}) and (\ref{eq3.9}) it follows that $SRu=SRz$. Hence, $w=SRu=Iu$ for some $w\in X_{\omega}$ is the unique point of coincidence of $SR$ and $I$. Then by Lemma \ref{l2.8}, $w$ is a unique common fixed point of $SR$ and $I$. So, $SRw=Iw=w$.\\
Similarly, there is another common fixed point $w'\in X_{\omega}: TUw'=Jw'=w'$.\\
For the uniqueness, by $(3.1.1)$ we have
\begin{equation}\label{eq3.10}
\begin{array}{rl}
&\psi(\omega_{\lambda}(SRw,TUw'))=\psi(\omega_{\lambda}(w,w') )\\
&\preceq F_{*}\Big(\psi(a^{*}\omega_{\lambda}(Iw,Jw')a+b^{*}\omega_{\lambda}(SRw,Jw')b+c^{*}\omega_{2\lambda}(TUw,Iw')c),\\
&\varphi(a^{*}\omega_{\lambda}(Iw,Jw')a+b^{*}\omega_{\lambda}(SRw,Jw')b+c^{*}\omega_{2\lambda}(TUw,Iw')c)\Big)\\
&=F_{*}\Big(\psi(a^{*}\omega_{\lambda}(w,w')a+b^{*}\omega_{\lambda}(w,w')b+c^{*}\omega_{2\lambda}(w,w')c,\\
&\varphi(a^{*}\omega_{\lambda}(w,w')a+b^{*}\omega_{\lambda}(w,w')b+c^{*}\omega_{2\lambda}(w,w')c)\Big).
\end{array}
\end{equation}
Thus, 
\begin{align*}
&\psi(\Vert \omega_{\lambda}(w,w')\Vert 1_{\mathbb{A}})\\
&\preceq F_{*}\Big(\psi(\Vert \omega_{\lambda}(w,w')\Vert(\Vert a\Vert^{2}+\Vert b\Vert^{2}+\Vert c\Vert^{2})
1_{\mathbb{A}}), \\
&\varphi(\Vert \omega_{\lambda}(w,w')\Vert(\Vert a\Vert^{2}+\Vert b\Vert^{2}+\Vert c\Vert^{2})
1_{\mathbb{A}})\Big)\\
&\preceq F_{*}\Big(\psi(\Vert \omega_{\lambda}(w,w')\Vert
1_{\mathbb{A}}), 
\varphi(\Vert \omega_{\lambda}(w,w')\Vert
1_{\mathbb{A}})\Big).
\end{align*}
So, $\psi(\Vert \omega_{\lambda}(w,w')\Vert 1_{\mathbb{A}})=\theta$ or $\varphi(\Vert \omega_{\lambda}(w,w')\Vert 1_{\mathbb{A}})=\theta$. Hence $w=w'$. Therefore, $w$ is a unique common fixed point of $SR, TU, I$ and $J$. \\
Furthermore, if we take pairs $(S, R),\ (S, I),\ (R, I),\ (T, J),\ (T, U),\ (U, J)$ are commuting pairs then
\[
\begin{array}{rl}
& Sw = S(SRw) = S(RS)w = SR(Sw)\\
&Sw = S(Iw) = S(RS)w = I(Sw)\\
&Rw = R(SRw) = RS(Rw) = SR(Rw)\\
&Rw = R(Iw) = (Rw),
\end{array}
\]
this shows that $Sw$ and $Rw$ is common fixed point of $(SR, I)$ and this gives
$SRw = Sw = Rw = Iw = w$.
Similarly, we have $TUw = Tw = Uw = Jw = w$.
Hence, $w$ is a unique common fixed point of $S,\ R,\ I,\ J,\ T,\ U$.
\end{proof}
\begin{corollary}\label{c3.2}
Let $X_{\omega}$ be a $C^{*}$.m.m space and $I, J, S, T : X_{\omega} \rightarrow X_{\omega}$ be self-mappings of $X_{\omega}$ such that the pairs $(S, I)$ and $(T, J)$ are occasionally weakly compatible. Suppose there exist $a,b,c\in \mathbb{A}$ with $0<\Vert a\Vert^{2}+\Vert b\Vert^{2}+\Vert c\Vert^{2}\leq1$ such that the following assertion for all $x,y\in X_{\omega}$ and $\lambda >0$ hold:
\begin{itemize}
\item[$(3.2.1)$] $\psi(\omega_{\lambda}(Sx,Ty))\preceq F_{*}\Big(\psi(N(x,y)),\varphi(N(x,y))\Big)$ where,
\begin{align*}
N(x,y)=a^{*}\omega_{\lambda}(Ix,Jy)a+b^{*}\omega_{\lambda}(Sx,Jy)b+c^{*}\omega_{2\lambda}(Ty,Ix)c
\end{align*}
$\psi\in \Psi, \ \varphi\in \Phi_{u}$ and $F_{*}\in \mathcal{C_{*}}$ such that $(\psi, \varphi, F_{*})$ is monotone; 
\item[$(3.2.2)$] $\Vert \omega_{\lambda}(Sx,Ty)\Vert< \infty$.
\end{itemize}
Then $S, T, I$ and $J$ have a unique common fixed point in $X_{\omega}$.
\end{corollary}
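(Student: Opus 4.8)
The plan is to derive Corollary~\ref{c3.2} as the special case $R=U=\mathrm{id}_{X_{\omega}}$ of Theorem~\ref{t3.1}. With $R$ and $U$ the identity map, $SR=S$ and $TU=T$, so occasional weak compatibility of $(S,I)$ and $(T,J)$ is precisely the hypothesis placed on $(SR,I)$ and $(TU,J)$ there, the expression $M(x,y)$ collapses to $N(x,y)$, and conditions $(3.2.1)$, $(3.2.2)$ are literally $(3.1.1)$, $(3.1.2)$ for the same $a,b,c$ and the same monotone triple $(\psi,\varphi,F_{*})$. Since the proof of Theorem~\ref{t3.1} in fact exhibits a \emph{unique} common fixed point of $SR,TU,I,J$ (the commuting-pairs clause plays no role in that part), it follows at once that $S,T,I,J$ have a unique common fixed point. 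Thus the proof can be as short as: ``set $R=U=\mathrm{id}_{X_{\omega}}$ in Theorem~\ref{t3.1}''.

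If a self-contained argument is preferred, I would copy the proof of Theorem~\ref{t3.1} verbatim with $SR,TU$ replaced by $S,T$. First, from occasional weak compatibility choose $u,v\in X_{\omega}$ with $Su=Iu$, $Tv=Jv$, $S(Iu)=I(Su)$ and $T(Jv)=J(Tv)$. The key step is to prove $Su=Tv$: insert $x=u$, $y=v$ in $(3.2.1)$, use $Su=Iu$, $Tv=Jv$, the axiom $\omega_{\lambda}(Iu,Iu)=\theta$ and the triangle inequality to bound $N(u,v)\preceq a^{*}\omega_{\lambda}(Iu,Jv)a+b^{*}\omega_{\lambda}(Iu,Jv)b+c^{*}\omega_{\lambda}(Iu,Jv)c$, then rewrite each summand as $\big(a\,\omega_{\lambda}(Iu,Jv)^{1/2}\big)^{*}\big(a\,\omega_{\lambda}(Iu,Jv)^{1/2}\big)$ and apply Lemma~\ref{l2.2}(1) to obtain $N(u,v)\preceq\Vert\omega_{\lambda}(Iu,Jv)\Vert(\Vert a\Vert^{2}+\Vert b\Vert^{2}+\Vert c\Vert^{2})1_{\mathbb{A}}$. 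Monotonicity of $(\psi,\varphi,F_{*})$ together with $\Vert a\Vert^{2}+\Vert b\Vert^{2}+\Vert c\Vert^{2}\le1$ then forces $\psi(\Vert\omega_{\lambda}(Iu,Jv)\Vert 1_{\mathbb{A}})\preceq F_{*}\big(\psi(\Vert\omega_{\lambda}(Iu,Jv)\Vert 1_{\mathbb{A}}),\varphi(\Vert\omega_{\lambda}(Iu,Jv)\Vert 1_{\mathbb{A}})\big)$, and property~(2) of Definition~\ref{C*-class}, with $(\psi_{2})$ or $(\varphi_{2})$, yields $\omega_{\lambda}(Iu,Jv)=\theta$ for all $\lambda>0$; hence $Su=Iu=Tv=Jv$.

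The rest follows the pattern of Theorem~\ref{t3.1}. If $Sz=Iz$ as well, the same estimate applied to $(z,v)$ gives $Sz=Tv=Su$, so $w:=Su=Iu$ is the unique point of coincidence of $S$ and $I$; Lemma~\ref{l2.8} promotes it to their unique common fixed point, $Sw=Iw=w$, and symmetrically there is $w'$ with $Tw'=Jw'=w'$; finally, applying $(3.2.1)$ to $(w,w')$ and repeating the estimate gives $\omega_{\lambda}(w,w')=\theta$, so $w=w'$ is the unique common fixed point of $S,T,I,J$.

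The only step that needs care---exactly as in Theorem~\ref{t3.1}---is the passage from the operator quantity $N(x,y)$ to a scalar multiple of $1_{\mathbb{A}}$: one must recast $a^{*}\omega a$ as $(a\omega^{1/2})^{*}(a\omega^{1/2})$, bound it by its norm via Lemma~\ref{l2.2}(1), and then invoke the monotonicity of $(\psi,\varphi,F_{*})$ to preserve $\preceq$ under the substitution. This is where the monotone-triple hypothesis is genuinely used; everything else is routine occasionally-weakly-compatible bookkeeping.
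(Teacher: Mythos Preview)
Your proposal is correct and matches the paper's own proof exactly: the paper simply sets $R=U$ equal to the identity map on $X_{\omega}$ and invokes Theorem~\ref{t3.1}. Your additional remark that the uniqueness of the common fixed point of $SR,TU,I,J$ is already established in the body of the proof of Theorem~\ref{t3.1} (so the commuting-pairs clause is not needed here) is a helpful clarification, and your optional self-contained argument is just a faithful specialization of that proof.
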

\begin{proof}
If we put $R=U :=Ix_{\omega}$ where $Ix_{\omega}$ is an identity mapping on $X_{\omega}$, the result follows from Theorem \ref{t3.1}.
\end{proof}
\begin{corollary}\label{c3.3}
Let $X_{\omega}$ be a $C^{*}$.m.m space and $ S, T : X_{\omega} \rightarrow X_{\omega}$ be self-mappings of $X_{\omega}$ such that $S$ and $T$ are occasionally weakly compatible. Suppose there exist $a,b,c\in \mathbb{A}$ with $0<\Vert a\Vert^{2}+\Vert b\Vert^{2}+\Vert c\Vert^{2}\leq1$ such that the following assertion for all $x,y\in X_{\omega}$ and $\lambda >0$ hold:
\begin{itemize}
\item[$(3.3.1)$] $\psi(\omega_{\lambda}(Tx,Ty))\preceq F_{*}\Big(\psi(O(x,y)),\varphi(O(x,y))\Big)$ where,
\begin{align*}
O(x,y)=a^{*}\omega_{\lambda}(Sx,Sy)a+b^{*}\omega_{\lambda}(Tx,Sy)b+c^{*}\omega_{2\lambda}(Ty,Sx)c
\end{align*}
$\psi\in \Psi, \ \varphi\in \Phi_{u}$ and $F_{*}\in \mathcal{C_{*}}$ such that $(\psi, \varphi, F_{*})$ is monotone; 
\item[$(3.3.2)$] $\Vert \omega_{\lambda}(Tx,Ty)\Vert< \infty$.
\end{itemize}
Then $S$ and $T$ have a unique common fixed point in $X_{\omega}$.
\end{corollary}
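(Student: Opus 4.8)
The plan is to obtain Corollary \ref{c3.3} as an immediate specialization of Corollary \ref{c3.2}, rather than rerunning the whole argument. Recall that Corollary \ref{c3.2} concerns four self-mappings $I,J,S,T$ of $X_{\omega}$ for which $(S,I)$ and $(T,J)$ are occasionally weakly compatible and which satisfy the contractive inequality $(3.2.1)$ with
\[
N(x,y)=a^{*}\omega_{\lambda}(Ix,Jy)a+b^{*}\omega_{\lambda}(Sx,Jy)b+c^{*}\omega_{2\lambda}(Ty,Ix)c
\]
together with the local boundedness condition $(3.2.2)$. First I would carry out the substitution: in Corollary \ref{c3.2}, replace the four mappings $I,J,S,T$ by $S,S,T,T$ respectively, so that the ``$I$'' and ``$J$'' of that corollary both become the present $S$, while its ``$S$'' and ``$T$'' both become the present $T$; the constants $a,b,c$ and the triple $(\psi,\varphi,F_{*})$ (including its monotonicity and the inequality $0<\Vert a\Vert^{2}+\Vert b\Vert^{2}+\Vert c\Vert^{2}\leq 1$) are kept unchanged.

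Next I would check that every hypothesis of Corollary \ref{c3.2} holds under this substitution. Both pairs $(S,I)$ and $(T,J)$ of that corollary become the single pair $(T,S)$, and since $S$ and $T$ are occasionally weakly compatible by hypothesis of Corollary \ref{c3.3} — i.e.\ there is $x\in X_{\omega}$ with $Sx=Tx$ at which $S$ and $T$ commute, a condition visibly symmetric in the two maps by Definition \ref{d2.7} — the pair $(T,S)$ is occasionally weakly compatible as required. The auxiliary term $N(x,y)$ collapses to exactly
\[
O(x,y)=a^{*}\omega_{\lambda}(Sx,Sy)a+b^{*}\omega_{\lambda}(Tx,Sy)b+c^{*}\omega_{2\lambda}(Ty,Sx)c,
\]
so $(3.2.1)$ becomes precisely $(3.3.1)$; and the left-hand side $\omega_{\lambda}(Sx,Ty)$ becomes $\omega_{\lambda}(Tx,Ty)$, so $(3.2.2)$ becomes $(3.3.2)$. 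Then Corollary \ref{c3.2} yields a unique common fixed point of the four maps $S,S,T,T$, that is, a unique $w\in X_{\omega}$ with $Sw=Tw=w$, which is exactly the conclusion of Corollary \ref{c3.3}.

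As for the main obstacle: there is essentially none — the whole argument is a consistency check on the substitution. The one point deserving a moment's care is that ``occasionally weakly compatible'' must be symmetric enough in its two arguments that the single hypothesis ``$S$ and $T$ are owc'' supplies owc for both ordered pairs $(S,I)=(T,S)$ and $(T,J)=(T,S)$ needed by Corollary \ref{c3.2}; this is immediate from Definition \ref{d2.7}, since $Sx=Tx$ and $STx=TSx$ are symmetric conditions. (Should a self-contained proof be preferred, one could instead repeat the scheme of Theorem \ref{t3.1}: extract a coincidence point of $T$ and $S$ from owc, push $(3.3.1)$ through the triangle inequality and the norm estimates used in $(\ref{eq3.5})$–$(\ref{eq3.6})$ to reduce it to $\psi(\Vert\omega_{\lambda}(\cdot,\cdot)\Vert 1_{\mathbb{A}})\preceq F_{*}(\psi(\cdot),\varphi(\cdot))$, invoke condition $(2)$ of the $C_{*}$-class function together with $(\psi_{2})$ and $(\varphi_{2})$ to force the coincidence, and finish with Lemma \ref{l2.8}; but the reduction above is considerably shorter.)
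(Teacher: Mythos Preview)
Your proposal is correct and follows essentially the same approach as the paper: the paper's proof is the one-line remark ``If we put $I=J:=S$, and $S:=T$ in $(3.2.1)$ and $(3.2.2)$ the result follows,'' which is exactly the substitution you carry out. Your additional care in verifying that the owc hypothesis is symmetric (so that the single assumption on $S,T$ covers both pairs needed in Corollary~\ref{c3.2}) is a useful sanity check that the paper leaves implicit.
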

\begin{proof}
If we put $I=J:=S$, and $S:=T$ in $(3.2.1)$  and $(3.2.2)$ the result follows from Theorem \ref{t3.1}.
\end{proof}
\begin{corollary}\label{c3.4}
Let $X_{\omega}$ be a $C^{*}$.m.m space and $ S, T : X_{\omega} \rightarrow X_{\omega}$ be self-mappings of $X_{\omega}$ such that $S$ and $T$ are occasionally weakly compatible. Suppose there exist $a\in \mathbb{A}$ with $0<\Vert a\Vert\leq1$ such that the following assertion for all $x,y\in X_{\omega}$ and $\lambda >0$ hold:
\begin{itemize}
\item[$(3.4.1)$] $\psi(\omega_{\lambda}(Tx,Ty))\preceq F_{*}\Big(\psi(a^{*}\omega_{\lambda}(Sx,Sy)a), \varphi(a^{*}\omega_{\lambda}(Sx,Sy)a)\Big)$,
where, $\psi\in \Psi, \ \varphi\in \Phi_{u}$ and $F_{*}\in \mathcal{C_{*}}$ such that $(\psi, \varphi, F_{*})$ is monotone; 
\item[$(3.4.2)$] $\Vert \omega_{\lambda}(Tx,Ty)\Vert< \infty$.
\end{itemize}
Then $S$ and $T$ have a unique common fixed point in $X_{\omega}$.
\end{corollary}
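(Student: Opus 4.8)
The plan is to obtain Corollary \ref{c3.4} as a direct specialization of Corollary \ref{c3.3}, taking $b=c=\theta$, where $\theta$ is the zero element of $\mathbb{A}$. First I would record that $\theta^{*}=\theta$ and $\theta X\theta=\theta$ for every $X\in\mathbb{A}$, so that the quantity $O(x,y)$ appearing in $(3.3.1)$ collapses:
\[
O(x,y)=a^{*}\omega_{\lambda}(Sx,Sy)a+b^{*}\omega_{\lambda}(Tx,Sy)b+c^{*}\omega_{2\lambda}(Ty,Sx)c=a^{*}\omega_{\lambda}(Sx,Sy)a .
\]
Hence condition $(3.3.1)$ becomes $\psi(\omega_{\lambda}(Tx,Ty))\preceq F_{*}\Big(\psi(a^{*}\omega_{\lambda}(Sx,Sy)a),\varphi(a^{*}\omega_{\lambda}(Sx,Sy)a)\Big)$, which is exactly $(3.4.1)$, while $(3.3.2)$ coincides verbatim with $(3.4.2)$.

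Next I would check the scalar constraint. With $b=c=\theta$ one has $\Vert b\Vert=\Vert c\Vert=0$, so the requirement $0<\Vert a\Vert^{2}+\Vert b\Vert^{2}+\Vert c\Vert^{2}\leq 1$ of Corollary \ref{c3.3} reduces to $0<\Vert a\Vert^{2}\leq 1$, equivalently $0<\Vert a\Vert\leq 1$, which is precisely the hypothesis of Corollary \ref{c3.4}. The occasional weak compatibility of $S$ and $T$, and the monotonicity of the triple $(\psi,\varphi,F_{*})$, are assumed in both statements. Therefore all hypotheses of Corollary \ref{c3.3} are satisfied, and its conclusion yields a unique common fixed point of $S$ and $T$ in $X_{\omega}$.

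There is no substantial obstacle here: the only points to verify are the algebraic collapse of $O(x,y)$ displayed above and the fact that the choice $b=c=\theta$ is admissible, i.e. that Corollary \ref{c3.3} imposes no positive lower bound on $\Vert b\Vert$ or $\Vert c\Vert$ individually (only the joint bound $0<\Vert a\Vert^{2}+\Vert b\Vert^{2}+\Vert c\Vert^{2}\leq 1$, which survives since $\Vert a\Vert>0$). Both are immediate, so the proof reduces to the single line: apply Corollary \ref{c3.3} with $b=c=\theta$.
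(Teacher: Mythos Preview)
Your proposal is correct and follows exactly the same approach as the paper: the paper's proof of Corollary~\ref{c3.4} consists of the single observation that setting $b=c:=\theta$ in condition $(3.3.1)$ reduces Corollary~\ref{c3.3} to the present statement. Your additional verifications (the collapse of $O(x,y)$ and the reduction of the norm constraint) are correct and simply make explicit what the paper leaves implicit.
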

\begin{proof}
If we put $b=c:=\theta$,  in $(3.3.1)$  the result follows from Corollary \ref{c3.3}.
\end{proof}
\section{Examples}
In this section we furnish some nontrivial examples in favour of our results.
\begin{example}\label{ex4.1}
Let $X=\mathbb{R}$ and consider, $\mathbb{A} = M_{2}(\mathbb{R})$ as in Example \ref{ex3.2}.\newline 
Define $\omega: (0,\infty)\times X \times X\rightarrow \mathbb{A_{+}}$ by
\[
\omega_{\lambda}(x,y) = \left (
\begin{array}{l}
 \vert \frac{x-y}{\lambda}\vert\ \ \ \ \ \ \ \ 0\\
\ \ 0\ \ \ \ \ \ \ \  \vert \frac{x-y}{\lambda}\vert
\end{array}
\right ),
\]
for all $x,y\in X$ and $\lambda>0$. It is easy to check that $\omega$ satisfies all the conditions of Definition \ref{d2.3}. So, $(X, \mathbb{A}, \omega)$ is a $C^{*}$.m.m space.
\end{example}
\begin{example}\label{ex4.2}
Let $X = \{ \frac{1}{c^{n}}: n=1,2, \cdots \}$ where $0 < c < 1$ and $\mathbb{A} = M_{2}(\mathbb{R})$. Define $\omega: (0,\infty)\times X \times X\rightarrow \mathbb{A_{+}}$ by
\[
\omega_{\lambda}(x,y) = \left (
\begin{array}{l}
 \Vert  \frac{x-y}{\lambda}\Vert\ \ \ \ \ \ \ \ 0\\
\ \  0\ \ \ \ \ \ \ \  \alpha \Vert \frac{x-y}{\lambda}\Vert
\end{array}
\right ),
\]
for all $x,y\in X$, $\alpha\geq 0$ and $\lambda>0$.
Then it is easy to check that $\omega$ is a $C^{*}$.m.m. 
\end{example}
\begin{example}\label{ex4.3}
Let $X=L^{\infty}(E)$ and $H=L^{2}(E)$, where $E$ is a Lebesgue measurable set. By $B(H)$ we denote
the set of bounded linear operator on Hilbert space $H$. Clearly, $B(H)$ is a $C^{*}$-algebra with the usual operator norm. \\
Define $\omega:(0,\infty)\times X\times X\rightarrow B(H)_{+}$ by
\[
\omega_{\lambda}(f,g)=\pi_{\vert \frac{f-g}{\lambda}\vert},\ \ \ (\forall f,g\in X),
\]
where $\pi_{h}:H\rightarrow H$ is the multiplication operator defined by
\[
\pi_{h}(\phi)=h.\phi,
\]
for $\phi \in H$. Then $\omega$ is a $C^{*}$.m.m  and $(X_{\omega},B(H),\omega)$ is a $\omega$-complete $C^{*}$.m.m space. It suffices to verify the completeness of $X_{\omega}$. For this, let $\{f_{n}\}$ be a $\omega$-Cauchy sequence with respect to $B(H)$, that is for an arbitrary
$\varepsilon> 0$, there is $N\in \mathbb{N}$ such that for all $m, n \geq N$,
\begin{center}
$\Vert \omega_{\lambda}(f_{m},f_{n})\Vert =\Vert \pi_{\vert \frac{f_{m}-f_{n}}{\lambda}\vert}\Vert
=\Vert \frac{f_{m}-f_{n}}{\lambda}\Vert_{\infty}\leq \varepsilon$,
\end{center}
so $\{f_{n}\}$ is a Cauchy sequence in Banach space $X$. Hence, there is a function $f \in X$ and $N_{1} \in \mathbb{N}$ such that
\begin{center}
$\Vert \frac{f_{n}- f}{\lambda}\Vert_{\infty}	\leq \varepsilon,\ \ (n \geq N_{1})$.
\end{center}
It implies that
\begin{center}
$\Vert \omega_{\lambda}(f_{n}, f)\Vert= \Vert \pi_{\vert \frac{f_{n}-f}{\lambda}\vert}\Vert=
\Vert \frac{f_{n}- f}{\lambda}\Vert _{\infty} \leq \varepsilon,\ \ (n \geq N_{1})$.
\end{center}
Consequently, the sequence $\{f_{n}\}$ is a $\omega$-convergent sequence in $X_{\omega}$ and so $X_{\omega}$ is a $\omega$-complete $C^{*}$.m.m space.
\end{example}
\begin{example}\label{ex4.4}
Let $(X,\mathbb{A}, \omega)$ is $C^{*}$.m.m space defined as in Example \ref{ex4.1}. Define $S,T,I,J: X_{\omega}\rightarrow X_{\omega}$ by
\[
Sx=Tx=2,\ \ Jx=4-x,\ \
Ix= \left \{
\begin{array}{l}
\frac{2x}{3}\ \ \ \ \ \ \ \ \ if\ x\in (-\infty,2),\\
2\ \ \ \ \ \ \ \ \ \ if\ x=2,\\
0\ \ \ \ \ \ \ \ \ \ if\ x\in (2,\infty).
\end{array}
\right.
\]
Suppose, 
\[
\left \{
\begin{array}{l}
\psi:\mathbb{A_{+}}\rightarrow \mathbb{A_{+}}\\
\psi(A)=2A,
\end{array}
\right.
\ \ \left \{
\begin{array}{l}
\varphi:\mathbb{A_{+}}\rightarrow \mathbb{A_{+}}\\
\varphi(A)=A,
\end{array}
\right.
\ \ \left \{
\begin{array}{l}
F_{*}:\mathbb{A_{+}}\times \mathbb{A_{+}}\rightarrow \mathbb{A}\\
F_{*}(A,B)=A-B.
\end{array}
\right.
\]
Then, $(\psi,\varphi, F_{*})$ is monotone. For all $x,y\in X_{\omega}=\mathbb{R}$ and $\lambda >0$, we have 

\begin{center}
$0=\Big\Vert \left (
\begin{array}{l}
 0\ \ \ \ \ \ 0\\
 0\ \ \ \ \ \ 0
\end{array}
\right )\Big\Vert=\Vert \omega_{\lambda}(Sx,Ty)\Vert< \infty$.
\end{center}
For every $a,b,c\in \mathbb{A}$ with $0<\Vert a\Vert^{2}+\Vert b\Vert^{2}+\Vert c\Vert^{2}\leq1$, we get  
\begin{center}
$\left (
\begin{array}{l}
 0\ \ \ \ \ \ 0\\
 0\ \ \ \ \ \ 0
\end{array}
\right )=\psi(\omega_{\lambda}(Sx,Ty))\preceq F_{*}\Big(\psi(M(x,y)),\varphi(M(x,y))\Big)$,
\end{center}
for all $x,y\in X_{\omega}$ and $\lambda >0$. Also clearly, the pairs $(S, I)$ and $(T, J)$ are occasionally weakly compatible.  So all the conditions of the Corollary \ref{c3.2} are satisfied and $x=2$ is a unique common fixed point of $S,\ T,\ I$ and $J$.
\end{example}
\section{Application}
Remind that if for $\lambda>0$ and $x,y\in L^{\infty}(E)$, define $\omega: (0,\infty)\times  L^{\infty}(E)\times  L^{\infty}(E) \rightarrow B(H)_{+}$ by
\[
\omega_{\lambda}(x,y)=\pi_{\vert \frac{x-y}{\lambda}\vert},
\]
where, $\pi_{h}:H\rightarrow H$ be defined as in Example \ref{ex4.3},
then $( L^{\infty}(E)_{\omega}, B(H), \omega)$ is a $\omega$-complete $C^{*}$.m.m space.\\

Let $E$ be a Lebesgue measurable set, $X=L^{\infty}(E)$ and $H=L^{2}(E)$ be the Hilbert space. Consider the following system of nonlinear integral equations:
\begin{equation}\label{eq5.11}
x(t)=w(t)+k_{i}(t,x(t))+\mu\int_{E}n(t,s)h_{j}(s,x(s))ds,
\end{equation}
for all $t\in E$, where $w\in L^{\infty}(E)_{\omega}$ is known, $k_{i}(t,x(t)),\ n(t,s),\ h_{j}(s,x(s))$,
$i,j=1,2$ and $i\neq j$ are real or complex valued functions that are measurable both in $t$ and $s$ on
$E$ and $\mu$ is real or complex number, and assume the following conditions:\\
\begin{itemize}
\item[$(a)$] $sup_{s\in E}\int_{E}|n(t,s)|dt=M_{1}<+\infty$, 
\item[$(b)$]  $k_{i}(s,x(s))\in L^{\infty}(E)_{\omega}$
for all $x\in L^{\infty}(E)_{\omega},$ and there exists $L_{1}>1$ such that for all $s\in E$,
\[
\frac{|k_{1}(s,x(s))-k_{2}(s,y(s))|}{\sqrt{2}}\geq L_{1}|x(s)-y(s)|\ \ \text{for all}\  x,y\in L^{\infty}(E)_{\omega},
\]
\item[$(c)$] $h_{i}(s,x(s))\in L^{\infty}(E)_{\omega}$ for all $x\in L^{\infty}(E)_{\omega}$, and there exists $L_{2}>0$ such that for all $s\in E$,
\[
|h_{1}(s,x(s))-h_{2}(s,y(s))|\leq L_{2}|x(s)-y(s)|\ \ \text{for all}\  x,y\in L^{\infty}(E)_{\omega},
\]
\item[$(d)$] there exists $x(t)\in L^{\infty}(E)_{\omega}$ such that 
\begin{center}
$x(t)-w(t)-\mu\int_{E}n(t,s)h_{1}(s,x(s))ds=k_{1}(t,x(t))$,
\end{center}
implies
\[
\begin{array}{rl}
&k_{1}(t,x(t))-w(t)-\mu\int_{E}n(t,s)h_{1}(s,k_{1}(s,x(s)))ds\\
&= k_{1}(t,x(t)-w(t)-\mu\int_{E}n(t,s)h_{1}(s,x(s))ds).
\end{array}
\]
\item[$(e)$] there exists $y(t)\in L^{\infty}(E)_{\omega}$ such that 
\begin{center}
$y(t)-w(t)-\mu\int_{E}n(t,s)h_{2}(s,y(s))ds=k_{2}(t,y(t))$,
\end{center}
implies
\[
\begin{array}{rl}
&k_{2}(t,y(t))-w(t)-\mu\int_{E}n(t,s)h_{i}(s,k_{2}(s,y(s)))ds\\
&= k_{2}(t,y(t)-w(t)-\mu\int_{E}n(t,s)h_{2}(s,y(s))ds).
\end{array}
\]
\end{itemize}
\begin{theorem}\label{t5.1}
With the assumptions (\text{a})-(e), the system of nonlinear integral equations (\ref{eq5.11}) has a unique solution $x^{*}$ in $L^{\infty}(E)_{\omega}$ for each real or complex number $\mu$ with $\frac{1+|\mu|L_{2}M_{1}}{L_{1}}\leq1$.
\end{theorem}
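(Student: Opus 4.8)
The plan is to extract the four self-maps that are hidden in hypotheses (a)--(e) and apply Corollary~\ref{c3.2} on the $\omega$-complete $C^{*}$.m.m.\ space $(L^{\infty}(E)_{\omega},B(H),\omega)$ with $\omega_{\lambda}(f,g)=\pi_{\vert(f-g)/\lambda\vert}$, whose $\omega$-completeness is exactly Example~\ref{ex4.3}. For $x\in L^{\infty}(E)_{\omega}$ I would set $Sx(t)=x(t)-w(t)-\mu\int_{E}n(t,s)h_{1}(s,x(s))\,ds$, let $Tx(t)$ be the same expression with $h_{2}$ in place of $h_{1}$, and put $Ix(t)=k_{1}(t,x(t))$, $Jx(t)=k_{2}(t,x(t))$. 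Conditions (b), (c) together with (a) make all four maps send $L^{\infty}(E)_{\omega}$ into itself, and $Sx=Ix$ (resp.\ $Tx=Jx$) holds precisely when $x$ solves (\ref{eq5.11}). The first thing to notice is that hypothesis (d) is nothing but the statement that $(S,I)$ is occasionally weakly compatible: its first displayed identity says that $x$ is a coincidence point $Sx=Ix$, and the ``implies'' identity, once expanded, is exactly $S(Ix)=I(Sx)$; likewise (e) says $(T,J)$ is owc. So everything reduces to checking $(3.2.1)$ and $(3.2.2)$.

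For $(3.2.1)$ I would choose $\psi(A)=\varphi(A)=\Vert A\Vert\,1_{B(H)}$; these lie in $\Psi$ and $\Phi_{u}$ because $A\mapsto\Vert A\Vert$ is continuous, $\theta\preceq A\preceq B$ forces $\Vert A\Vert\le\Vert B\Vert$, and $\Vert A\Vert\,1_{B(H)}=\theta\Leftrightarrow A=\theta$; and I would take $F_{*}(A,B)=\tfrac{1}{\sqrt{2}}A$, which is a $C_{*}$-class function and makes $(\psi,\varphi,F_{*})$ monotone. For the coefficients I set $b=c=\theta$ and $a=\bigl((1+|\mu|L_{2}M_{1})/L_{1}\bigr)^{1/2}\,1_{B(H)}$, so that $\Vert a\Vert^{2}+\Vert b\Vert^{2}+\Vert c\Vert^{2}=(1+|\mu|L_{2}M_{1})/L_{1}\le1$ by hypothesis and $N(x,y)=a^{*}\omega_{\lambda}(Ix,Jy)a=\tfrac{1+|\mu|L_{2}M_{1}}{L_{1}}\,\omega_{\lambda}(Ix,Jy)$. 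The estimate then runs: for a.e.\ $t$, $Sx(t)-Ty(t)=(x(t)-y(t))-\mu\int_{E}n(t,s)[h_{1}(s,x(s))-h_{2}(s,y(s))]\,ds$, so by (c) and (a) one gets $|Sx(t)-Ty(t)|\le(1+|\mu|L_{2}M_{1})\Vert x-y\Vert_{\infty}$, and by (b) $\Vert x-y\Vert_{\infty}\le\tfrac{1}{\sqrt{2}\,L_{1}}\Vert k_{1}(\cdot,x)-k_{2}(\cdot,y)\Vert_{\infty}$. Since $\Vert\omega_{\lambda}(f,g)\Vert=\lambda^{-1}\Vert f-g\Vert_{\infty}$, this gives
\[
\Vert\omega_{\lambda}(Sx,Ty)\Vert\le\frac{1+|\mu|L_{2}M_{1}}{\sqrt{2}\,L_{1}}\,\Vert\omega_{\lambda}(Ix,Jy)\Vert=\frac{1}{\sqrt{2}}\,\Vert N(x,y)\Vert .
\]
Because $\omega_{\lambda}(Sx,Ty)\preceq\Vert\omega_{\lambda}(Sx,Ty)\Vert\,1_{B(H)}$ (Lemma~\ref{l2.2}(1)), we obtain $\psi(\omega_{\lambda}(Sx,Ty))\preceq\tfrac{1}{\sqrt{2}}\Vert N(x,y)\Vert\,1_{B(H)}=F_{*}\bigl(\psi(N(x,y)),\varphi(N(x,y))\bigr)$, which is $(3.2.1)$.

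Condition $(3.2.2)$ is immediate, since $w,k_{i}(\cdot,x),h_{j}(\cdot,x)\in L^{\infty}(E)_{\omega}$ and (a) give $Sx,Ty\in L^{\infty}(E)$, hence $\Vert\omega_{\lambda}(Sx,Ty)\Vert=\lambda^{-1}\Vert Sx-Ty\Vert_{\infty}<\infty$. Corollary~\ref{c3.2} then applies and produces a unique common fixed point $x^{*}\in L^{\infty}(E)_{\omega}$ of $S,T,I,J$; in particular $Sx^{*}=Ix^{*}$, so $x^{*}$ is a solution of (\ref{eq5.11}), and uniqueness of the common fixed point --- equivalently of the point of coincidence, as in the proof of Theorem~\ref{t3.1} together with Lemma~\ref{l2.8} --- yields uniqueness of the solution.

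I expect two points to need care. The bookkeeping one is matching the nested compositions appearing in (d) and (e) verbatim with $S\circ I,\,I\circ S$ and $T\circ J,\,J\circ T$, keeping track of which $k_{i}$ is paired with which $h_{j}$. The essential one is that $(3.2.1)$ cannot be obtained as a genuine operator inequality $\omega_{\lambda}(Sx,Ty)\preceq c\,\omega_{\lambda}(Ix,Jy)$: the order on the multiplication operators $\pi_{(\cdot)}$ is pointwise a.e., whereas the integral term in $Sx-Ty$ depends on the values of $x,y$ over all of $E$ and admits no pointwise bound; one is therefore forced to route the argument through the scalars $\Vert\cdot\Vert\,1_{B(H)}$, which is exactly why $\psi=\varphi=\Vert\cdot\Vert\,1_{B(H)}$ and $a$ a real multiple of $1_{B(H)}$ are the right choices, mirroring the reduction $a^{*}\omega a\preceq\Vert\omega\Vert\Vert a\Vert^{2}1_{\mathbb{A}}$ used inside the proof of Theorem~\ref{t3.1}. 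Finally, the expansiveness $L_{1}>1$ in (b) is what makes the estimate close: the $\sqrt{2}$-scaled lower bound on $|k_{1}-k_{2}|$ must absorb the factor $1+|\mu|L_{2}M_{1}$ coming from (a) and (c) while still leaving the strict-contraction margin that a $C_{*}$-class function requires, and this is possible precisely when $(1+|\mu|L_{2}M_{1})/L_{1}\le1$.
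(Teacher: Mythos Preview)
Your proposal is correct and follows essentially the same route as the paper: the same four maps $S,T,I,J$, the same coefficients $a=\sqrt{(1+|\mu|L_{2}M_{1})/L_{1}}\,1_{B(H)}$, $b=c=\theta$, the same $F_{*}(A,B)=\tfrac{1}{\sqrt{2}}A$, the same use of (d)--(e) to get occasional weak compatibility, and the same norm estimate feeding into Corollary~\ref{c3.2}. The only difference is in the auxiliary functions: the paper takes $\psi(B)=\tfrac12 B$, $\varphi(B)=\tfrac14 B$ and ends with the norm inequality $\Vert\psi(\omega_{\lambda}(Sx,Ty))\Vert\le\Vert F_{*}(\psi(N),\varphi(N))\Vert$, whereas your choice $\psi=\varphi=\Vert\cdot\Vert\,1_{B(H)}$ places both sides among scalar multiples of the identity, so the scalar estimate immediately \emph{is} the operator inequality $(3.2.1)$ --- your reference to Lemma~\ref{l2.2}(1) is then superfluous, since $\psi(\omega_{\lambda}(Sx,Ty))$ already equals $\Vert\omega_{\lambda}(Sx,Ty)\Vert\,1_{B(H)}$ by definition.
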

\begin{proof}
Define
\[
Sx(t)=x(t)-w(t)-\mu\int_{E}n(t,s)h_{1}(s,x(s))ds,
\]
\[
Tx(t)=x(t)-w(t)-\mu\int_{E}n(t,s)h_{2}(s,x(s))ds,
\]
\[
Ix(t)=k_{1}(t,x(t)),\ Jx(t)=k_{2}(t,x(t)).
\]
Set $a=\sqrt{\frac{1+\vert \mu\vert M_{1}L_{2}}{L_{1}}}.1_{B(H)}$, $b=c=\theta=0_{B(H)}$, then $a\in B(H)_{+}$ and $0 <\Vert a \Vert^{2}+\Vert b \Vert^{2}+\Vert c \Vert^{2}=\frac{1+\vert \mu\vert M_{1}L_{2}}{L_{1}}\leq1$.\newline
Define
\[
\left \{
\begin{array}{l}
\psi:B(H)_{+}\rightarrow B(H)_{+}\\
\psi(B)=\frac{1}{2}B,
\end{array}
\right.
\ \ \left \{
\begin{array}{l}
\varphi:B(H)_{+}\rightarrow B(H)_{+}\\
\varphi(B)=\frac{1}{4}B,
\end{array}
\right.
\ \ \left \{
\begin{array}{l}
F_{*}:B(H)_{+}\times B(H)_{+}\rightarrow B(H)\\
F_{*}(A,B)=\frac{1}{\sqrt{2}}A.
\end{array}
\right.
\]
Then, $(\psi,\varphi, F_{*})$ is monotone.\newline
For any $h\in H$, we have
\[
\begin{array}{rl}
&\Vert \psi(\omega_{\lambda}(Sx,Ty))\Vert =\frac{1}{2}\sup_{\Vert h \Vert=1}(\pi_{\vert \frac{Sx-Ty}{\lambda} \vert}h, h)\\\\
&=\sup_{\Vert h \Vert=1}\int_{E}\Big[ \frac{1}{2\lambda}\Big\vert (x-y)+\mu\int_{E}n(t,s)(h_{2}(s,y(s)-h_{1}(s,x(s))ds \Big\vert\Big]h(t)\overline{h(t)}dt\\\\
&\leq \sup_{\Vert h \Vert=1}\int_{E}\Big[ \frac{1}{2\lambda}\Big\vert (x-y)+\mu\int_{E}n(t,s)(h_{2}(s,y(s)-h_{1}(s,x(s))ds \Big\vert\Big]\vert h(t)\vert^{2}dt\\\\
&\leq \frac{1}{2\lambda}\sup_{\Vert h \Vert=1}\int_{E} \vert h(t)\vert^{2}dt\Big[\Vert x-y\Vert_{\infty}+ \vert \mu \vert M_{1}L_{2}\Vert x-y\Vert_{\infty} \Big]\\\\
&\leq(\frac{1+\vert \mu\vert M_{1}L_{2}}{2\lambda})\Vert x-y\Vert_{\infty}\\\\
&\leq\frac{1}{\sqrt{2}}(\frac{1+\vert \mu\vert M_{1}L_{2}}{2L_{1}})\Vert \frac{k_{1}(t,x(t))-k_{2}(t,y(t))}{\lambda}\Vert_{\infty}\\\\
&=\frac{1}{\sqrt{2}}.\frac{1}{2}(\frac{1+\vert \mu\vert M_{1}L_{2}}{L_{1}})\Vert \omega_{\lambda}(Ix, Jy) \Vert\\\\
&=\frac{1}{\sqrt{2}}.\frac{1}{2}\Vert a\Vert^{2}\Vert \omega_{\lambda}(Ix, Jy) \Vert\\\\
&=\frac{1}{\sqrt{2}}.\frac{1}{2}\Big(\Vert a\Vert^{2}\Vert \omega_{\lambda}(Ix, Jy) \Vert+
\Vert b\Vert^{2}\Vert \omega_{\lambda}(Sx, Jy) \Vert+\Vert c\Vert^{2}\Vert \omega_{2\lambda}(Ty, Ix) \Vert
\Big)\\\\
&=\Vert F_{*}\Big(\psi(N(x,y)), \varphi(N(x,y))\Big)\Vert
\end{array}
\]
Then, 
\begin{center}
$\Vert \psi(\omega_{\lambda}(Sx,Ty))\Vert \leq \Vert F_{*}\Big(\psi(N(x,y)), \varphi(N(x,y))\Big)\Vert $,
\end{center}
for all $x,y\in  L^{\infty}(E)_{\omega}$ and $\lambda>0$. Also by conditions $(d)$ and $(e)$ the pairs $(S,I)$ and $(T,J)$ are occasionally weakly compatible. Therefore, by the Corollary \ref{c3.2}, there exists a unique common fixed point $x^{*}\in L^{\infty}(E)_{\omega}$
such that $x^{*}=Sx^{*}=Tx^{*}=Ix^{*}=Jx^{*}$, which proves the existence of unique solution of (\ref{eq5.11}) in $L^{\infty}(E)_{\omega}$. This completes the proof.
\end{proof}

\end{document}